\documentclass[a4paper,11pt]{article}


\usepackage{hyperref}
\usepackage{stmaryrd}
\usepackage{graphicx,xcolor}
\usepackage{amsfonts}
\usepackage{amsmath,amssymb,mathrsfs,amsthm}
\usepackage[all]{xy} 
\usepackage{color}
\usepackage{times}
\usepackage{enumerate,vmargin}
\usepackage{verbatim}
\usepackage{paralist}
\usepackage[labelfont=bf,size=small,font=it]{caption}
\usepackage[numbers,sort&compress]{natbib}

\setpapersize{A4}
\setmarginsrb{2.5cm}{2cm}{2.5cm}{2cm}{.4cm}{4mm}{0cm}{7.5mm}

\hypersetup{
    unicode      = false,     
    pdftoolbar   = true,      
    pdfmenubar   = true,      
    pdffitwindow = true,      
    pdfnewwindow = true,      
    colorlinks   = true,      
    linkcolor    = blue,      
    citecolor    = blue,      
    filecolor    = blue,      
    urlcolor     = blue       
}

\theoremstyle{plain}
\newtheorem{lem}{Lemma}
\newtheorem{thm}[lem]{Theorem}
\newtheorem{prop}[lem]{Proposition}

\theoremstyle{definition}

\theoremstyle{remark}


\newcommand{\N}{\mathbb N}
\newcommand{\R}{\mathbb R}

\newcommand{\cT}{\mathcal T}
\newcommand{\cP}{\mathcal P}
\newcommand{\cE}{\mathcal E}
\newcommand{\cR}{\mathcal R}
\newcommand{\cN}{\mathcal N}
\newcommand{\tcT}{\tilde{\cT}}
\newcommand{\e}{\mathrm{e}}
\newcommand{\Ht}{\mathrm{ht}}
\newcommand{\llb}{\llbracket}
\newcommand{\rrb}{\rrbracket}

\newcommand{\bcT}{\cT}


\linespread{1.05}


\title{\Large\textsc{$k$-cut model for the Brownian Continuum Random Tree}}

\author{Minmin Wang
\thanks{Department of Mathematics, University of Sussex, Falmer, Brighton BN1 9QH, United Kingdom. Email: minmin.wang@sussex.ac.uk}}

\date{\today}

\begin{document}

\maketitle
\begin{abstract}
To model the destruction of a resilient network, Cai, Holmgren, Devroye and  Skerman \cite{CHDS19} introduced the $k$-cut model on a random tree, as an extension to the classic problem of cutting down random trees. 
Berzunza, Cai and Holmgren \cite{BCH19} later proved that the total number of cuts in the $k$-cut model to isolate the root of a Galton--Watson tree with a finite-variance offspring law and conditioned to have $n$ nodes, when divided by $n^{1-1/2k}$, converges in distribution to some random variable defined on the Brownian CRT. We provide here a direct construction of the limit random variable, relying upon the Aldous--Pitman fragmentation process and a deterministic time change. 
\end{abstract}

\section{Introduction}
Let $k\in \N$ and let $T$ be a rooted tree. The following procedure is considered by Cai, Holmgren, Devroye and  Skerman \cite{CHDS19}. To each vertex $v$ of $T$, we associate an independent Poisson process $N_{v}=(N_{v}(t))_{t\ge 0}$ of rate $1$. Imagine that each time $N_{v}$ increases, the vertex $v$ is cut once and is eventually removed when it receives $k$ cuts.  The procedure ends when the root is removed. We are interested in the total number of cuts, denoted as $X_{k}(T)$. Let us observe that for $k=1$, the above procedure reduces to the classic problem of cutting down random trees introduced by Meir and Moon \cite{MeMo70}; see in particular \cite{Ja06, ABH14, BM13, AD13, BrWa17a, Di15} for some recent progress on the classical version. 

Let $\xi=(\xi(p))_{p\ge 0}$ be a probability measure on the set of non negative integers which satisfies
\[
\sum_{p\ge 1}p \,\xi(p)=1, \quad \text{ and } \quad 0<\sigma:=\Big(\sum_{p\ge 2}p(p-1)\xi(p)\Big)^{1/2}<\infty. 
\]
For $n\ge 1$, let $T_{n}$ be a Galton--Watson tree with offspring distribution $\xi$ conditioned on having $n$ vertices. Berzunza, Cai and Holmgren show in \cite{BCH19} that 
\begin{equation}
\label{cv: k-cut}
\Big(\frac{\sigma}{\sqrt{n}}T_{n}, \frac{X_{k}(T_{n})}{\sigma^{\frac{1}{k}}n^{1-\frac{1}{2k}}}\Big) \xrightarrow[n\to\infty]{d} (\cT, Z_{k}),
\end{equation}
where $\cT$ is the so-called Brownian Continuum Random Tree, and $Z_{k}$ is a non degenerate random variable whose distribution is characterised via its moments. 
Note that the convergence of $\frac{\sigma}{\sqrt{n}}T_{n}$ to $\cT$, due to Aldous \cite{aldcrt3}, is well known and takes place in the weak topology of the Gromov--Hausdorff space. We defer the formal definitions of these objects till a later point. 
Let us also point out that the joint convergence in \eqref{cv: k-cut} generalises an earlier result for $k=1$ by Janson \cite{Ja06}. 

In the case $k=1$, it is also known that $Z_{1}$ can be explicitly written as a functional of the so-called Aldous--Pitman fragmentation process, thanks to the works of Addario-Berry, Broutin \& Holmgren \cite{ABH14}, Bertoin \& Miermont \cite{BM13}, Abraham \& Delmas \cite{AD13}.  In this work, we extend this construction of $Z_{1}$ to the general setting of $k\ge 1$, thus answering a question in \cite{BCH19} on the construction of $Z_{k}$. 
 To that end, let us start with a brief introduction to the Aldous--Pitman fragmentation process. 

The Aldous--Pitman fragmentation process can be viewed as the analogue of the $1$-cut procedure for the Brownian continuum random tree (CRT). 
First, we need to construct this CRT. Let us take $\e=(e_{s})_{0\le s\le 1}$, where $\frac12 \e$ is distributed as the standard normalised Brownian excursion of duration $1$. For $s, t\in [0, 1]$, define
\[
d(s, t)=e_{s}+e_{t}-2 b(s, t), \quad \text{ where }\quad b(s,t)=\min_{s\wedge t \le u \le s\vee t}e_{u}.
\]
It turns out the function $d$ is non negative, symmetric and satisfies the triangular inequality. To turn it into a metric, let $s\sim t$ if and only if $d(s, t)=0$. Then $d$ defines a metric on the quotient space $\cT:=[0, 1]/\!\!\sim$, which we still denote as $d$. 
In the sequel, we will refer to the (random) metric space $(\cT, d)$ as the Brownian CRT. 
Note that it has ``tree-like'' features: each pair of points in $\cT$, say $\sigma$ and $\sigma'$, is joined by a unique path,  denoted as $\llb \sigma, \sigma'\rrb$, 
which turns out to be a geodesic. Metric spaces with such properties are called $\R$-trees. Interested readers can check Evans \cite{Evans} and Le Gall \cite{LeG05}  for more background on $\R$-trees and CRT. 

Let us also introduce the following notation on $(\cT, d)$ which will be useful later. 
We denote by $p: [0, 1]\to \cT$ the canonical projection which sends every $t\in [0, 1]$ to its equivalence class with respect to $\sim$. The {\it root} of $(\cT, d)$ is then the point $\rho=p(0)=p(1)$. 
In addition, the map $p$ also induces a probability measure on $\cT$: the {\it mass measure}, denoted as $\mu$, is the push-forward of the uniform measure on $[0, 1]$ by $p$. 
On the other hand, the {\it length measure} $\ell$ is a $\sigma$-finite measure on $\cT$, characterised by the relation $\ell(\llb \sigma, \sigma'\rrb)=d(\sigma, \sigma')$, for all $\sigma, \sigma'\in \cT$. 

We introduce a Poisson point measure $\cP(dt, dx)=\sum_{i\ge 1}\delta_{(t_{i}, x_{i})}(dt, dx)$ on $\R_{+}\times \cT$ of intensity $dt\, \ell(dx)$. One can imagine the $(t_{i}, x_{i})$'s as cuts on $\cT$: at time $t_{i}$, the point $x_{i}$ is removed from $\cT$, which disconnects the tree. As times moves on, more cuts arrive and $\cT$ fragments into finer and finer connected components. 
The Aldous--Pitman fragmentation consists in describing the time evolution of the collection of $\mu$-masses of these connected components. It is also known that the above cutting process of $\cT$ using points from $\cP$ appears as the scaling limit of the $1$-cut procedure on $T_{n}$. On the other hand, the key element in our construction is the following {\it time-changed} version of $\cP$: for $k\in [1, \infty)$, define
\begin{equation}
\label{defcP}
\tilde\cP=\sum_{i\ge 1}\delta_{(s_{i}, \,x_{i})}, \quad \text{where} \quad s_{i}=\big(\Gamma(k+1)t_{i}\big)^{\frac{1}{k}},\  i\ge 1.
\end{equation}
Here, $\Gamma(\cdot)$ stands for the Gamma function. 
Let us denote by $\cT_{t}=\{\sigma\in \cT: \cP([0, t]\times \llb \rho, \sigma\rrb)=0\}$, the subtree connected to the root at time $t$. 
Similarly, denote $\tilde\cT_{t}=\{\sigma \in \cT: \tilde\cP([0, t]\times \llb \rho, \sigma\rrb)=0\}$ the remaining subtree in the time-changed cutting process. 
We define
\begin{equation}
\label{def: Xk}
X_{k}(\cT)=\int_{0}^{\infty}\mu(\tilde\cT_{t})dt =\int_{0}^{\infty} \mu\big(\cT_{t^{k}/\Gamma(k+1)}\big)dt=\frac{(\Gamma(k+1))^{\frac{1}{k}}}{k}\int_{0}^{\infty}\mu(\cT_{s})\, s^{\frac{1}{k}-1}ds. 
\end{equation}
For $k=1$, $X_{1}(\cT)$ appears in \cite{BM13, AD13, ABH14} as the scaling limit of $X_{1}(T_{n})$. Let us also recall Aldous and Pitman \cite{AP98} have shown that the process $(\mu(\cT_{t}))_{t\ge 0}$ has the same distribution as $((1+L_{t})^{-1})_{t\ge 0}$ with $(L_{t})_{t\ge 0}$ being a $\frac12$-stable subordinator. Combined with a Lamperti time-change, this then implies $X_{1}(\cT)$ has the Rayleigh distribution (\cite{BM13}). Note that we also have the following bound from \eqref{def: Xk}.
\begin{equation}
\label{bd-X}
k\,\Gamma(k+1))^{-\frac1k}  X_{k}(\bcT)\le \int_{0}^{1}s^{\frac{1}{k}-1}ds +\int_{1}^{\infty}\mu(\cT_{s})ds\le k+X_{1}(\bcT).
\end{equation}
So in particular, $X_{k}(\cT)<\infty$, a.s.
Let us also point out that even though the discrete model is only defined for $k\in \N$, the above definition of $X_{k}(\cT)$ makes sense for all $k\in [1, \infty)$. Here is our main result.  

\begin{thm}
\label{thm1}
For all $k\in \N$, conditional on $(\cT, d)$, $X_{k}(\cT)$ has the same distribution as $Z_{k}$. 
\end{thm}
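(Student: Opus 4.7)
My plan is to prove Theorem~\ref{thm1} by a scaling-limit argument: under a natural coupling between $T_n$ and $\cT$, the rescaled $X_k(T_n)/(\sigma^{1/k} n^{1-1/(2k)})$ converges to $X_k(\cT)$ as defined in \eqref{def: Xk}. Combined with the joint convergence \eqref{cv: k-cut} of \cite{BCH19} and the moment characterisation of $Z_k$ conditional on $\cT$, this identifies $X_k(\cT)$ with $Z_k$ in distribution, as stated.

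The first step is an integral representation of $X_k(T_n)$. For each $v$, let $\tau_v$ denote the $k$-th arrival time of $N_v$ (distributed as the sum of $k$ i.i.d.\ rate-one exponentials) and $\tilde\tau_v = \min\{\tau_w : w \in \llb\rho, v\rrb\}$ the moment when $v$ is disconnected from $\rho$; write $C_s = \{v : \tilde\tau_v > s\}$ and $A_t = \sum_v N_v(t \wedge \tilde\tau_v)$. The counting process $A$ has predictable compensator $\int_0^\cdot |C_s|\,ds$, so
$$X_k(T_n) = A_\infty = \int_0^\infty |C_s|\,ds + M_\infty,$$
with $M_\infty$ a martingale of variance $E[X_k(T_n)] = O(n^{1-1/(2k)})$ by \eqref{cv: k-cut}. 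In particular $M_\infty/n^{1-1/(2k)} \to 0$ in $L^2$, reducing the theorem to the scaling limit of $\int_0^\infty |C_s|\,ds$.

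To identify this limit, observe that $\tilde\tau_v$ is the minimum of $d_v + 1$ i.i.d.\ copies of $\tau_v$ (with $d_v$ the graph distance from $\rho$ to $v$), and $P(\tau_v > s) = 1 - s^k/\Gamma(k+1) + O(s^{k+1})$. Under the Aldous coupling $(\sigma/\sqrt{n})T_n \to \cT$, a vertex $v$ with $\sigma d_v/\sqrt{n} \to d(\rho, x)$ for some $x \in \cT$ satisfies, at $s = t\,\sigma^{1/k} n^{-1/(2k)}$,
$$P(\tilde\tau_v > t\,\sigma^{1/k} n^{-1/(2k)} \mid T_n) \longrightarrow \exp(-t^k\, d(\rho, x)/\Gamma(k+1)) = P(x \in \tilde\cT_t \mid \cT).$$
Extending this to $p$-tuples of independent $\mu$-sampled vertices via the reduced subtree $R(v_1,\ldots,v_p)$ and Aldous's joint convergence \cite{aldcrt3}, one obtains the finite-dimensional convergence of $n^{-1}|C_{t\,\sigma^{1/k} n^{-1/(2k)}}|$ to $\mu(\tilde\cT_t)$ jointly with $(\sigma/\sqrt n)T_n \to \cT$. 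Integrating and invoking \eqref{bd-X} for uniform integrability yields the required convergence, and hence the theorem.

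The main obstacle is the exchange of limit and integration over $[0,\infty)$, requiring uniform tail estimates on $|C_s|$ at both ends of the time axis to justify dominated convergence. A cleaner alternative that avoids tightness is a direct moment computation: using \eqref{def: Xk} and the intensity of $\tilde\cP$ derived from \eqref{defcP},
$$E[X_k(\cT)^p \mid \cT] = p!\int_{t_1 < \dots < t_p}\int_{\cT^p}\exp\Bigl(-\frac{1}{\Gamma(k+1)}\int_{\llb\rho, x_1\rrb \cup \cdots \cup \llb\rho, x_p\rrb} \max_{i:\,x\in\llb\rho, x_i\rrb} t_i^k\, \ell(dx)\Bigr)\,\mu^{\otimes p}(dx)\,dt,$$
and one checks that this matches the leading-order combinatorial expansion of $E[X_k(T_n)^p]/(\sigma^{p/k} n^{p(1-1/(2k))})$ computed as in \cite{BCH19}, which equals $E[Z_k^p \mid \cT]$; the moment characterisation of $Z_k$ then concludes.
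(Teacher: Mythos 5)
Your two routes are, respectively, the paper's two proofs, so let me take them in turn. Your ``cleaner alternative'' is essentially Proof 1 (Section \ref{sec2}): the conditional moment formula you write down is correct (it is the Poisson void probability for $\tilde\cP$ over $\bigcup_i[0,t_i]\times\llb\rho,x_i\rrb$, plus exchangeability to restrict to the ordered sector) and is equivalent to \eqref{moments} after decomposing the max-weighted length integral along the reduced tree; the remaining step, matching it with equations (8)--(9) of \cite{BCH19} and invoking their moment characterisation of $Z_k$ (Theorem 2 and Lemma 8 there), is exactly what the paper does, so this route is sound as sketched. Your main plan is Proof 2 (Section \ref{sec3}), and there you make one genuine simplification worth noting: by compensating the full record-counting process $A_t=\sum_v N_v(t\wedge\tilde\tau_v)$, whose intensity is exactly $|C_s|=n\mu_n(s)$, you bypass both the decomposition into $r$-records with the appeal to \eqref{record} and the comparison of $a_n(\delta_n t)$ with $n\mu_n(\delta_n t)$ (Lemmas \ref{mart} and \ref{a-cv} in the paper); the identification of the fidi limit of $\mu_n(\delta_n t)$ via sampled vertices and reduced trees then proceeds as in \eqref{cv-R} and Proposition \ref{prop}.

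The genuine gap is the one you flag yourself but do not close: the interchange of limit and $\int_0^\infty$. Your proposed fix, ``invoking \eqref{bd-X} for uniform integrability,'' does not work, because \eqref{bd-X} bounds the \emph{limit} quantity $X_k(\cT)$; what is needed is a bound uniform in $n$ on the prelimit tails, namely $\lim_{t\to\infty}\limsup_n \mathbb E\big[\int_t^\infty \mu_n(\delta_n s)\,ds\big]=0$. This is exactly Lemma \ref{lem: bd} of the paper, and it is not soft: it uses Janson's estimate $\mathbb E[Z_m(T_n)]\le Cm$ for the expected number of vertices at height $m$ in $T_n$, together with the two regimes of bounds on $p(t)=\mathbb P(\eta_v>t)$, and it is precisely where the exponent relation $n\delta_n^{2k}=\sigma^2$ enters. (The small-$t$ end is harmless since $\mu_n\le 1$.) Relatedly, your claim that $M_\infty$ has variance $\mathbb E[X_k(T_n)]=O(n^{1-1/(2k)})$ ``by \eqref{cv: k-cut}'' is not justified: \eqref{cv: k-cut} is convergence in distribution and gives no moment bound by itself; you need either the moment convergence established in \cite{BCH19} or, again, the tail estimate of Lemma \ref{lem: bd} (which yields $\mathbb E[\int_0^\infty \mu_n(\delta_n t)\,dt]=O(1)$ and hence the required order for $\mathbb E[X_k(T_n)]$). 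With that lemma supplied, your streamlined martingale argument does give Proposition \ref{prop'}, and the comparison with \eqref{cv: k-cut} concludes as in the paper; one should also note, as the paper does, that the joint convergence with the tree is upgraded using the fact that the reduced trees $(\cR_p)_{p\ge1}$ determine the law of $(\cT,d,\mu)$.
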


We'll give two proofs to the theorem. In Section \ref{sec2}, we give a first proof by identifying the conditional moments of $X_{k}(\cT)$ given $\cT$ with those of $Z_{k}$, which were computed  in \cite{BCH19}. In Section \ref{sec3}, we give a second proof via weak convergence arguments. Even it takes a bit more space, the second proof is perhaps more helpful in explaining the motivation for the definition \eqref{def: Xk}, as well as provides an alternative proof to the convergence in \eqref{cv: k-cut}.

\section{Conditional expectation of $X_{k}(\bcT)$ given $\bcT$}
\label{sec2}

We will need the following notation. For $q\in \N$ and $\mathbf s=(s_{1}, s_{2}, \dots, s_{q})\in [0, 1]^{q}$, we set 
$\Delta^{\e}_{1}(\mathbf s)=e_{s_{1}}$, and more generally for $2\le r\le q$, 
\[
\Delta^{\e}_{r}(\mathbf s)=e_{s_{r}}-\max_{i<r}b(s_i, s_{r}), \quad \text{where }\ b(s,t)=\min_{s\wedge t \le u \le s\vee t}e_{u}.
\]
Note that $\Delta^{\e}_{1}(\mathbf s)+\cdots+\Delta^{\e}_{r}(\mathbf s)$ is the total length (i.e.~$\ell$-mass) of the reduced subtree of $\cT$ spanned by $p(s_{1}), \dots, p(s_{r})$, for all $r\le q$. Our goal is to prove the following formulas on the moments of $X_{k}(\cT)$. 

\begin{prop}
\label{prop: moments}
For all $k\ge 1$ and $q\in \N$, we have
\begin{align}
\notag
\mathbb E[X_{k}(\cT)^{q}\,|\,\e\,]=&\,q!\int_{[0, 1]^{q}}ds_{1}\cdots ds_{q} \int_0^{\infty}\int_{0}^{x_{1}}\cdots\int_{0}^{x_{q-1}} \\ &\exp\left(-\frac{1}{k!}\left(\Delta^{\e}_{1}(\mathbf s)x_{1}^{k}+\Delta^{\e}_{2}(\mathbf s)x_{2}^{k}+\cdots+\Delta^{\e}_{q}(\mathbf s)x_{q}^{k}\right)\right)dx_{q}\cdots dx_{1}. 
\label{moments}
\end{align}
\end{prop}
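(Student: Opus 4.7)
The plan is to expand $X_k(\bcT)^q$ as a $q$-fold integral against indicators of the form $\mathbf 1_{\{p(s_j)\in \tilde\cT_{x_j}\}}$, push the conditional expectation through via Fubini, and then evaluate the resulting joint probability using the exponential formula for the Poisson measure $\tilde\cP$. The combinatorial factor $q!$ and the ordering $x_1>\cdots>x_q>0$ in \eqref{moments} will appear automatically after symmetrizing in the time variables.

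More concretely, using \eqref{def: Xk} together with $\mu(\tilde\cT_t)=\int_0^1\mathbf 1_{\{p(s)\in\tilde\cT_t\}}\,ds$, I raise to the $q$-th power and symmetrize in the $q$ time variables to obtain
\[
X_k(\bcT)^q=q!\int_{x_1>\cdots>x_q>0}\!\!d\mathbf x\int_{[0,1]^q}\prod_{j=1}^q\mathbf 1_{\{p(s_j)\in\tilde\cT_{x_j}\}}\,d\mathbf s.
\]
By Fubini, the proposition reduces to computing, for fixed $\mathbf s$ and $\mathbf x$,
\[
\mathbb P\bigl(p(s_j)\in\tilde\cT_{x_j}\ \forall j\,\bigm|\,\e\bigr)=\mathbb P\bigl(\tilde\cP(A)=0\,\bigm|\,\e\bigr),\qquad A:=\bigcup_{j=1}^q[0,x_j]\times\llb\rho,p(s_j)\rrb.
\]

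To compute the $\tilde\cP$-intensity of $A$, I unwind the time change \eqref{defcP}: the pushforward of $dt\,\ell(dx)$ under $t\mapsto(\Gamma(k+1)t)^{1/k}=(k!\,t)^{1/k}$ is $\tfrac{s^{k-1}}{(k-1)!}\,ds\,\ell(dx)$. For $x\in\bcT$, set $j^*(x):=\min\{j:x\in\llb\rho,p(s_j)\rrb\}$; since $x_1>\cdots>x_q$, the $s$-slice of $A$ above such $x$ is exactly $[0,x_{j^*(x)}]$, and hence
\[
\int_A\tfrac{s^{k-1}}{(k-1)!}\,ds\,\ell(dx)=\sum_{j=1}^q\tfrac{x_j^k}{k!}\,\ell\Bigl(\llb\rho,p(s_j)\rrb\setminus\bigcup_{i<j}\llb\rho,p(s_i)\rrb\Bigr)=\frac{1}{k!}\sum_{j=1}^q\Delta^\e_j(\mathbf s)\,x_j^k,
\]
where the last equality uses that $\llb\rho,p(s_j)\rrb\setminus\bigcup_{i<j}\llb\rho,p(s_i)\rrb$ is the new segment added to the reduced subtree when the leaf $p(s_j)$ is attached, with $\ell$-mass $e_{s_j}-\max_{i<j}b(s_i,s_j)=\Delta^\e_j(\mathbf s)$. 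The Poisson exponential formula then yields the integrand $\exp\bigl(-\tfrac{1}{k!}\sum_j\Delta^\e_j(\mathbf s)\,x_j^k\bigr)$ of \eqref{moments}.

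The only genuinely geometric point is the identification $\ell(\llb\rho,p(s_j)\rrb\setminus\bigcup_{i<j}\llb\rho,p(s_i)\rrb)=\Delta^\e_j(\mathbf s)$, which is the telescoping formula for the growth of a reduced $\R$-subtree as its leaves are revealed one at a time; everything else is routine Fubini together with the Poisson exponential formula.
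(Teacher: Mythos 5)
Your proof is correct, and while it follows the same overall skeleton as the paper's argument (expand the $q$-th moment over $q$ $\mu$-sampled points, order the time variables to produce the factor $q!$, and let the telescoping lengths $\Delta^{\e}_{j}(\mathbf s)$ of the growing reduced subtree appear), you evaluate the key conditional probability by a different mechanism. The paper fixes the reduced tree $\cR_{q}$, attaches to each edge $e$ an independent exponential variable $E_{e}$ of mean $1/l(e)$, invokes the distributional identity $\big(\mathcal E_{v_{r}}\big)_{r\le q}\overset{(d)}{=}\big(\min_{e\in P(v_{r})}(k!E_{e})^{1/k}\big)_{r\le q}$, and then factorises the survival event edge by edge using the ordering of the times. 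You instead work directly with the time-changed Poisson measure $\tilde\cP$: by the mapping theorem its intensity is $\frac{s^{k-1}}{(k-1)!}ds\,\ell(dx)$, the event $\{p(s_{j})\in\tilde\cT_{x_{j}}\ \forall j\}$ is exactly the void event $\{\tilde\cP(A)=0\}$ for $A=\bigcup_{j}[0,x_{j}]\times\llb\rho,p(s_{j})\rrb$, and the ordering $x_{1}>\cdots>x_{q}$ lets you slice $A$ over the increments $\llb\rho,p(s_{j})\rrb\setminus\bigcup_{i<j}\llb\rho,p(s_{i})\rrb$, whose $\ell$-masses are the $\Delta^{\e}_{j}(\mathbf s)$. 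This buys a slightly more self-contained computation (no auxiliary edge variables or separate identification of the law of $\mathcal E_{v}$; the time change enters only through the explicit intensity), at the cost of having to justify the slice decomposition of $A$, which you do correctly; the paper's edge-variable route, in turn, makes the connection with the discrete $k$-cut picture (independent clocks on edges/vertices) more transparent. Both arguments are valid for all real $k\ge 1$ with $k!=\Gamma(k+1)$, $(k-1)!=\Gamma(k)$.
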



\begin{proof}
For $v\in \cT$, we define $\cE_{v}=\inf\{t>0: \tilde\cP([0, t]\times \llb \rho, v\rrb)=1\}$, the moment that $v$ is separated from the root. Then $v\in \tcT_{t}$ if and only if $\cE_{v}>t$. Therefore, we can re-write $X_{k}(\cT)^{q}$ as follows.
\begin{align*}
\big(X_{k}(\cT)\big)^{q}&=\int_{\R_{+}^{q}}\mu(\tilde\cT_{t_{1}})\mu(\tilde\cT_{t_{2}})\cdots \mu(\tilde\cT_{t_{q}})dt_{1}dt_{2}\cdots dt_{q} \\
&= \int_{\R_{+}^{q}}\int_{\cT^{q}} \mathbf 1_{\{\mathcal E_{v_{1}}>t_{1}, \,\dots,\, \mathcal E_{v_{q}>t_{q}}\}}\mu(dv_{1})\cdots\mu(dv_{q})dt_{1}\cdots dt_{q}\\
&= \int_{\R_{+}^{q}}\int_{[0, 1]^{q}} \mathbf 1_{\{\mathcal E_{p(s_{1})}>t_{1},\, \dots,\, \mathcal E_{p(s_{q})}>t_{q}\}}ds_{1}\cdots ds_{q}\,dt_{1}\cdots dt_{q},
\end{align*}
where we have used in the last line the definition that $\mu$ is the push-forward of the Lebesgue measure on $[0, 1]$. 
Write $\mathbb E_{\e}$ as a shorthand for $\mathbb E[\cdot\,|\,\e\,]$. The above yields that 
\[
\mathbb E_{\e}[X_{k}(\bcT)^{q}]=\int_{[0, 1]^{q}} \int_{\R_{+}^{q}}\mathbb P_{\e}(\mathcal E_{p(s_{1})}>t_{1},\, \dots,\, \mathcal E_{p(s_{q})}>t_{q})dt_{1}\cdots dt_{q}\,ds_{1}\cdots ds_{q}.
\]
We then split $\R_{+}^{q}$ into $q!$ subdomains according to the $q!$ outcomes in ranking $(t_{i})_{1\le i\le q}$. However, $(s_{i})_{1\le i\le q}$ is sampled in an i.i.d fashion and is therefore exchangeable, so that integration from each subdomain will contribute equally. Hence,
\[
\mathbb E_{\e}[X_{k}(\bcT)^{q}]=q!\int_{[0, 1]^{q}} \int_{0}^{\infty}\int_{0}^{t_{1}}\dots\int_{0}^{t_{q-1}} \mathbb P_{\e}(\mathcal E_{p(s_{1})}>t_{1},\, \dots,\, \mathcal E_{p(s_{q})}>t_{q})dt_{q}\cdots dt_{1}\,ds_{1}\cdots ds_{q}. 
\]
\begin{figure}[tp]
\centering
\includegraphics[height=4.5cm]{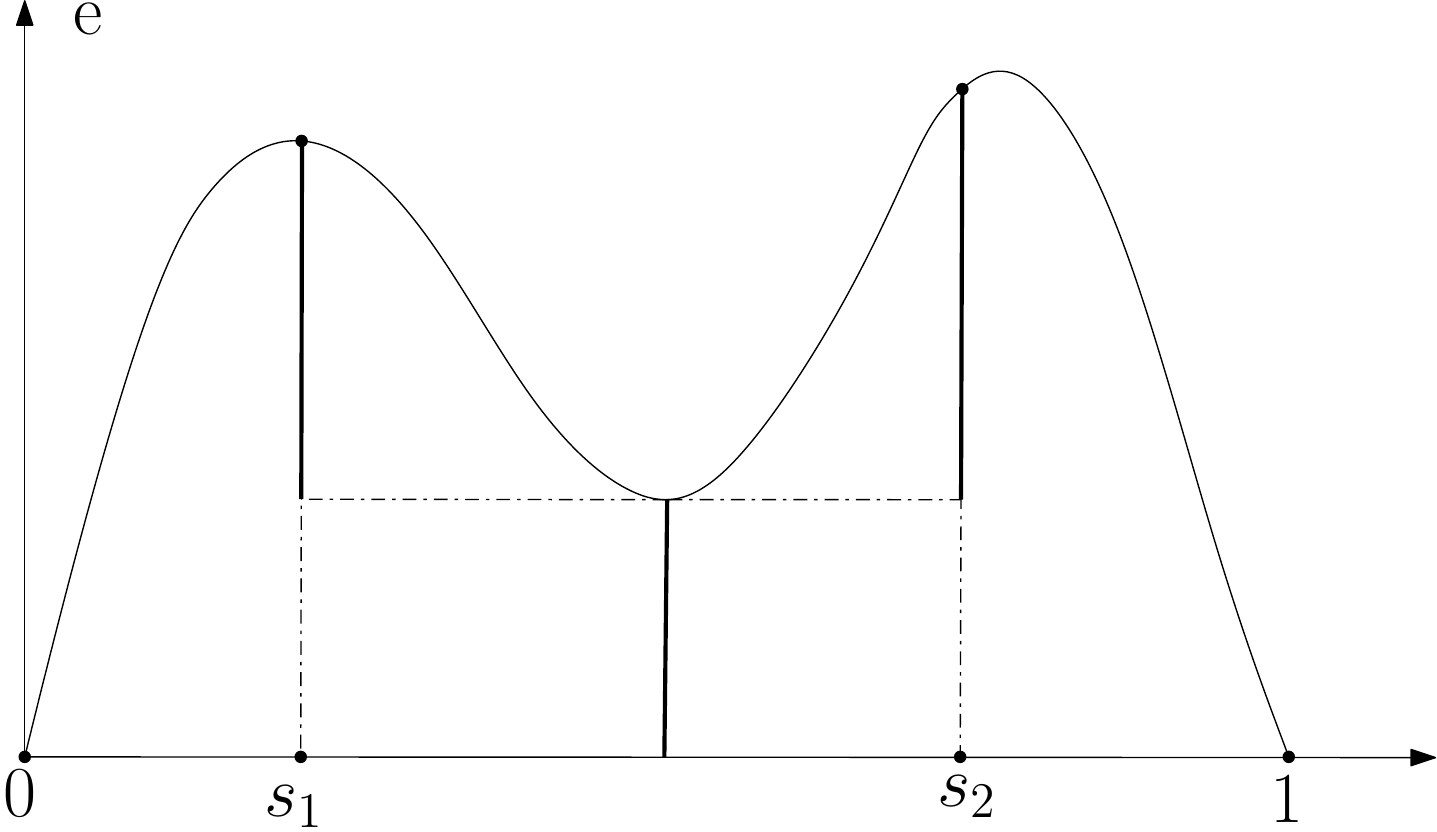}
\caption{\label{fig} An illustration of $\cR_{q}$ with $q=2$. Here, $\cR_{q}$ has the shape of a binary tree with $2$ leaves, one branch point and three edges (depicted by the line segments in bold). The edge lengths correspond to the lengths of these line segments.}
\end{figure}

Let $\cR_{q}$ be the reduced subtree of $\cT$ spanned by $v_{1}=p(s_{1}),\dots, v_{q}=p(s_{q})$, i.e.~the smallest connected subspace of $\cT$ containing these $q$ points and the root $\rho$. Note that $\cR_{q}$ is a ``finite'' tree in the sense that it only has a finite number of branch points and leaves. Here, it will be convenient to think of it as a (graph) tree $(\mathrm V_{q}, \mathrm E_{q})$, where the vertex set $\mathrm V_{q}$ consists of the root, the leaves and the branch points of $\cR_{q}$ and each edge $e\in \mathrm E_{q}$ is equipped with an edge length $l(e)\in (0, \infty)$. These edge lengths are consistent with the distance $d$ in the following way: for each $v\in \mathrm V_{q}$,  
$d(\rho, v)=\sum_{e\in P(v)} l(e)$, where $P(v)$ stands for the set of edges on the path from the root $\rho$ to $v$. See also Fig.~\ref{fig} for an example of $\cR_{q}$. Now to each edge $e$ in this tree, we associate an independent exponential variable $E_{e}$ of mean $1/l(e)$. It follows from the definition \eqref{defcP} of $\tilde\cP$ that $(\mathcal E_{v_{r}})^{k}/k!$ is distributed as an exponential random variable of mean $1/d(\rho, v_{r})=1/e_{s_{r}}$. It is then straightforward to check that
\[
\big(\mathcal E_{v_{r}}\,; \ 1\le r\le q\big) \overset{(d)}{=}\Big(\min_{e\in P(v_{r})}\, (k!E_{e})^{1/k}\,; \ 1\le r\le q\Big). 
\]
Bearing in mind that $t_{1}>t_{2}>\cdots>t_{q}$, we then find that 
\begin{align*}
\mathbb P_{\e}(\mathcal E_{v_{1}}>t_{1},\dots,\mathcal E_{v_{q}}>t_{q})&=\mathbb P_{\e}\big( (k!E_{e})^{\frac1k}>t_{r}, \forall\, e\in P(v_{r}), 1\le r\le q\big)\\
&=\mathbb P_{\e}\big( (k!E_{e})^{\frac1k}>t_{1}, \forall\, e\in P(v_{1})\big)\mathbb P_{\e}\big((k!E_{e})^{\frac1k}>t_{2}, \forall\, e\in P(v_{2})\!\setminus\! P(v_{1})\big) \\
 &\quad \cdots \mathbb P_{\e}\big((k!E_{e})^{\frac1k}>t_{2}, \forall\, e\in P(v_{q})\setminus \cup_{r<q}P(v_{r})\big)\\
&=\,\exp\left(-\frac{1}{k!}\left(\Delta^{\e}_{1}(\mathbf s)t_{1}^{k}+ \Delta^{\e}_{2}(\mathbf s)t_{2}^{k}+ \cdots \Delta^{\e}_{q}(\mathbf s) t_{q}^{k}\right)\right). 
\end{align*}
By the previous arguments, this completes the proof. 
\end{proof}

\begin{proof}[{\bf Proof 1 of Theorem \ref{thm1}}]
Comparing \eqref{moments} with equations (8) and (9) in \cite{BCH19}, we see that $\mathbb E[X_{k}(\cT)^{q}|\e]=\mathbb E[Z_{k}^{q}|\e]$ for all $q\in \N$. Applying Theorem 2 and Lemma  8 there, we conclude that conditional on $\e$, $X_{k}(\cT)$ has the same distribution as $Z_{k}$. 
\end{proof}

\section{Scaling limit of $X_{k}(T_{n})$}
\label{sec3}

Here, we give a second proof of the theorem by showing $X_{k}(\cT)$ is the scaling limit of $X_{k}(T_{n})$. 
Throughout this section, we assume $k\in\{2, 3, \dots\}$. 

\subsection{Convergence of random trees}

We briefly recall Aldous' Theorem on the convergence of the conditioned Galton--Watson tree $T_{n}$, as well as provide some necessary background on the Gromov--Hausdorff topology. Further details on these topics can be found in \cite{LeG05, Evans, Burago, Mi09, aldcrt3}. 

The {\it Gromov--Hausdorff distance} between two compact metric spaces $(X, d_{X})$ and $(Y, d_{Y})$ is the following quantity:
\[
d_{\mathrm{GH}}(X, Y)=\inf_{\phi, \varphi, Z}d_{Z, \mathrm{Haus}}(\phi(X), \varphi(Y)),
\]
where the infimum is over all the isometric embeddings $\phi: X\to Z$ and $\varphi: Y\to Z$ into a common metric space $(Z, d_{Z})$, and $d_{Z, \mathrm{Haus}}$ stands for the usual Hausdorff distance for the compact sets of $Z$. In our application, we often need to keep track of specified points in the initial spaces.  To that end, let $\mathbf x=(x_{1}, \dots, x_{p})$ and   $\mathbf y=(y_{1}, \dots, y_{p})$ be $p\in \N$ points of $X$ and $Y$. Then the {\it marked Gromov--Hausdorff distance} between $(X, d_{X}, \mathbf x)$ and $(Y, d_{Y}, \mathbf y)$ is defined as 
\[
d_{p, \mathrm{GH}}(X, Y)=\inf_{\phi, \varphi, Z}\Big(d_{Z, \mathrm{Haus}}(\phi(X), \varphi(Y))\vee \max_{1\le i\le p}d_{Z}\big(\phi(x_{i}), \varphi(y_{i})\big)\Big),
\]
where the infimum is again over all the isometric embeddings of $X$ and $Y$ into a common metric space. 
For each $p\ge 1$, it turns out that the space of metric spaces with $p$ marked points is a Polish space with respect to $d_{p, \mathrm{GH}}$ (\cite{Mi09}). 
Now the convergence of $T_{n}$ mentioned earlier can be given a precise meaning. Let us recall that the Brownian CRT $(\cT, d)$ is a metric space by definition. Recall also $\rho\in\cT$ stands for its root. Equipping its vertex set with the graph distance, we can also view the tree $T_{n}$ as a metric space. Let us denote by $\frac{\sigma}{\sqrt{n}}T_{n}$ the rescaled metric space where the graph distance is multiplied by a factor $\frac{\sigma}{\sqrt{n}}$. Denote also by $\rho_{n}$ its root. We have
\begin{equation}
\label{cv-tree}
\Big(\frac{\sigma}{\sqrt{n}}T_{n}, \rho_{n}\Big) \xrightarrow[n\to\infty]{(d)} \big(\cT, \rho),
\end{equation}
in the weak topology of the marked Gromov--Hausdorff distance. 

We note that $\cT$ is further equipped with a probability measure $\mu$. Let us define its discrete counterpart: for $n\ge 1$, let $\mu_{n}$ be the uniform probability measure on the vertex set of $T_{n}$. In fact, Aldous's Theorem in \cite{aldcrt3} also implies the following convergence of reduced trees. 
Given $\cT$, let $(V_{i})_{i\ge 1}$ be an i.i.d.~sequence of points in $\cT$  sampled with $\mu$. For $p\in \N$, denote by $\cR_{p}$ the reduced tree of $\cT$ spanned by $V_{1}, \dots, V_{p}$. Similarly, we sample an i.i.d.~sequence $(V^{n}_{i})_{i\ge 1}$ from $T_{n}$ with law $\mu_{n}$. Let $\cR^{n}_{p}$ be the reduced subtree of $T_{n}$ spanned by $V^{n}_{1}, \dots, V^{n}_{p}$, namely, the smallest subgraph of $T_{n}$ (an edge of the subgraph is also an edge of $T_{n}$) containing $V^{n}_{1}, \dots, V^{n}_{p}$ and the root $\rho_{n}$. As above, we denote by $\frac{\sigma}{\sqrt{n}}\cR^{n}_{p}$ the metric space obtained from $\cR^{n}_{p}$ by equipping its vertex set with $\frac{\sigma}{\sqrt{n}}$ times the graph distance. Then we have  
\begin{equation}
\label{cv-R}
\forall\,p\in\N, \quad 
\Big(\frac{\sigma}{\sqrt{n}}\cR^{n}_{p}, V^{n}_{1}, \dots, V^{n}_{p}\Big) \xrightarrow[n\to\infty]{(d)} (\cR_{p}, V_{1}, \dots, V_{p}),  
\end{equation}
with respect to the marked Gromov--Hausdorff topology. We have seen that $\cR_{p}$ can be viewed as a (graph) tree with edge lengths. But so does $\frac{\sigma}{\sqrt{n}}\cR^{n}_{p}$, where the edge length is simply $\frac{\sigma}{\sqrt{n}}$.  
In fact, the convergence in \eqref{cv-R} amounts to saying that the ``shape'' of $\cR^{n}_{p}$ coincides with that of $\cR_{p}$ for large $n$ and 
\begin{equation}
\label{cv-l}
\frac{\sigma}{\sqrt{n}}\,\#\cR^{n}_{p} \xrightarrow[n\to\infty]{(d)} \ell(\cR_{p}), \quad p=1, 2, \dots.
\end{equation}
where $\#$ stands for the counting measure on the vertex set of $\cR^{n}_{p}$ and $\ell$ is the length measure of $\cT$. 

Let us recall the Poisson point measure $\cP$ has an intensity $dt \,\ell(dx)$. Since $\ell(\cR_{p})<\infty$, there is a finite number of ``cuts''  $(t_{i}, x_{i})$ from $\cP$ which fall on $\cR_{p}$ before time $t$. So a convenient approach to studying the cutting of $\cT$ is first look at those cuts on $\cR_{p}$, $p\ge 1$. We'll also see the convergences in \eqref{cv-R} and \eqref{cv-l} will be our starting point for proving the convergence of $X_{k}(T_{n})$. 

\subsection{Convergence of the cutting process}

For each vertex $v$ of $T_{n}$, let us denote $\eta_{v}=\inf\{t: N_{v}(t)=k\}$, the time when $v$ is removed from $T_{n}$. We show here that the point measure $\mathcal P_{n}:=\sum_{v\in T_{n}}\delta_{(\eta_{v}, v)}$ converges in an appropriate sense to $\tilde\cP$. Let us start with the following observation. 

\begin{lem}
\label{lem: gam}
For each $m\in\N$, suppose $a_{m}\in (0, \infty)$ and let $(G_{m, i})_{1\le i\le m}$ be independent Gamma$(k, \frac{1}{a_{m}})$ random variables whose probability density function is given by $\frac{1}{(k-1)!}a_{m}^{k}x^{k-1}e^{-a_{m}x}$, $x>0$. Let 
\[
\cN_{m}(t)=\sum_{1\le i\le m}\mathbf 1_{\{G_{m, i}\le t\}}, \quad t\ge 0.
\]
If $m\,a_{m}^{k}\to a\in (0, \infty)$ as $m\to\infty$, then we have 
\[
\big(\cN_{m}(t)\big)_{t\ge 0} \xrightarrow[m\to\infty]{(d)} \big(\cN(t^{k}/k!)\big)_{t\ge 0} \quad \text{ in }\  \mathbb D(\R_{+}, \R), 
\]
where $(\cN(t))_{t\ge 0}$ is a Poisson process on $\R_{+}$ of rate $a$ and $\mathbb D(\R_{+}, \R)$ is the space of c\`adl\`ag functions endowed with the Skorokhod topology. 

\end{lem}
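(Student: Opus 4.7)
The plan is to prove the lemma by a standard Poisson approximation argument, establishing separately (a) convergence of finite-dimensional distributions, and (b) tightness in the Skorokhod topology on $\mathbb D(\R_+, \R)$.

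The starting point is an asymptotic estimate for $\mathbb P(G_{m,1}\le t)$. Since $m\to\infty$ while $ma_m^k\to a\in(0,\infty)$, necessarily $a_m\to 0$. Integrating the density from $0$ to $t$ and expanding $e^{-a_m x}=1+O(a_m)$ uniformly on $[0,t]$ gives
\[
\mathbb P(G_{m,1}\le t) \;=\; \frac{a_m^k\,t^k}{k!}\bigl(1+O(a_m)\bigr),\qquad m\to\infty,
\]
for each fixed $t\ge 0$, and hence $m\,\mathbb P(G_{m,1}\le t)\to a\,t^k/k!$, which matches the mean of $\cN(t^k/k!)$. This is where the time change $t\mapsto t^k/k!$ enters: the Gamma$(k,1/a_m)$ law places vanishing mass near $0$ at rate proportional to $t^k$, reflecting the fact that $G_{m,1}$ is the $k$-th arrival time of a Poisson process of small rate $a_m$.

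For finite-dimensional convergence, fix $0=t_0<t_1<\cdots<t_p$ and set $p_{m,j}=\mathbb P(G_{m,1}\in(t_{j-1},t_j])$. The increment vector $(\cN_m(t_j)-\cN_m(t_{j-1}))_{1\le j\le p}$ is multinomial with parameters $m$ and $(p_{m,1},\dots,p_{m,p})$. By the estimate above, $m\,p_{m,j}\to \frac{a}{k!}(t_j^k-t_{j-1}^k)$, and the classical multinomial-to-independent-Poisson limit theorem yields joint convergence in distribution to independent Poisson variables with those means. Since these coincide with the increments of $(\cN(t^k/k!))_{t\ge 0}$, the finite-dimensional distributions of $\cN_m$ converge to those of the claimed limit.

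For tightness, I would invoke the standard fact that each $\cN_m$ is an increasing integer-valued c\`adl\`ag process with unit jumps, and the limit $(\cN(t^k/k!))_{t\ge 0}$ shares these properties and, because $t\mapsto t^k/k!$ is continuous, has almost surely no fixed discontinuities. In this setting (e.g.\ Kallenberg, \emph{Foundations of Modern Probability}, Thm.~16.14, or equivalent point-process criteria), convergence of finite-dimensional distributions at a dense set of time-points upgrades to convergence in the Skorokhod topology. Combining the two parts gives the lemma. I do not expect any serious obstacle: the only mildly delicate point is controlling the error in the Taylor expansion uniformly on compacts to ensure joint convergence of the multinomial parameters, which is routine once one notes $a_m\to 0$.
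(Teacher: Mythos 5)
Your proposal is correct and follows essentially the same route as the paper: an elementary small-$t$ estimate for the Gamma distribution function, convergence of finite-dimensional distributions by a law-of-small-numbers computation, and an upgrade to convergence in $\mathbb D(\R_+,\R)$ using the monotonicity of $\mathcal N_m$ and the absence of fixed discontinuities of the time-changed Poisson limit. The only cosmetic differences are that the paper gets the estimate $\mathbb P(G\le t)=\frac{t^k}{k!}+t^{k+1}R(t)$ from the identity $\mathbb P(G\le t)=\mathbb P(X\ge k)$ with $X$ Poisson of mean $t$, and proves finite-dimensional convergence by an inductive computation of conditional binomial probabilities, whereas you integrate the density directly and invoke the multinomial-to-independent-Poisson limit for the increments.
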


\begin{proof}
Let $G$ denote a Gamma$(k, 1)$ random variable and let $X$ be a Poisson random variable of mean $t$. We note that
\begin{equation}
\label{gam}
\mathbb P(G\le t)=\mathbb P(X\ge k)=\sum_{j=k}^{\infty}e^{-t}\frac{t^{j}}{j!}=\frac{t^{k}}{k!}+t^{k+1}\,R(t),
\end{equation}
where $R(\cdot)$ is bounded on any finite interval. Let $T>0$. For all $t\le T$ and $p\ge 0$, noting $\mathbb P(G_{m, 1}\le t)=\mathbb P(G\le a_{m}t)$, we deduce  that 
\begin{align*}
\mathbb P(\cN_{m}(t)=p)&=\binom{m}{p} \Big(\mathbb P(G_{m, 1}\le t)\Big)^{p} \Big(\mathbb P(G_{m, 1}>t)\Big)^{m-p}\\
&=\frac{\binom{m}{p}}{m^{p}}\bigg(\frac{m(a_{m}t)^{k}}{k!}+m(a_{m}t)^{k+1}R(a_{m}t))\bigg)^{p}\bigg(1-\frac{(a_{m}t)^{k}}{k!}+(a_{m}t)^{k+1}R(a_{m}t))\bigg)^{m-p}\\
&\to \frac{1}{p!}\Big(\frac{a\,t^{k}}{k!}\Big)^{p}\exp(-a\,t^{k}/k!)=\mathbb P(\cN(t^{k}/k!)=p). 
\end{align*}
We now extend this to multidimensional marginals. Let $l\ge 2$, $0\le t_{1}\le t_{2}\le \cdots \le t_{l}$ and a sequence of non negative integers $p_{1}\le p_{2}\le \cdots \le p_{l}$. Then for $m\ge p_{l}$, we apply \eqref{gam} again to find that
\begin{align*}
&\,\mathbb P(\cN_{m}(t_{l})=p_{l}\,|\, \cN_{m}(t_{i})=p_{i}, 1\le i\le l-1)\\
=&\binom{m-p_{l-1}}{p_{l}-p_{l-1}}\Big(\mathbb P(G_{m, 1}\le t_{l}\,|\, G_{m, 1}>t_{l-1})\Big)^{p_{l}-p_{l-1}} \Big(\mathbb P(G_{m, 1}>t_{l}\,|\, G_{m, 1}>t_{l-1})\Big)^{m-p_{l}}\\
\to& \frac{1}{(p_{l}-p_{l-1})!}\Big(\frac{a\,t_{l}^{k}}{k!}-\frac{a\,t_{l-1}^{k}}{k!}\Big)^{p_{l}-p_{l-1}}\exp\Big(-\frac{a(t_{l}^{k}-t_{l-1}^{k})}{k!}\Big),
\end{align*}
which is precisely $\mathbb P(\cN(t_{l}^{k}/k!)=p_{l}\,|\, \cN(t_{i}^{k}/k!)=p_{i}, 1\le i\le l-1)$. 
Combined with an induction argument, this readily yields the distributional convergence of $(\cN_{m}(t_{i}), 1\le i\le l)$ to $(\cN_{m}(t_{i}), 1\le i\le l)$ for all $(t_{i})_{1\le i\le l}$, $l\ge 1$. Since $t\mapsto \cN_{m}(t)$ is non decreasing, we conclude with the convergence in $\mathbb D(\R_{+}, \R)$. 
\end{proof}

Recall the reduced trees $\cR^{n}_{p}$ and $\cR_{p}$. Let us take the vertices $v\in \cR^{n}_{p}$ and rank them in the increasing order of the $\eta_{v}$'s. We write the ranked sequence as $(v^{n, p}_{i})_{1\le i\le \#\cR^{n}_{p}}$ so that $\eta_{v^{n, p}_{1}}<\eta_{v^{n, p}_{2}}<\cdots<\eta_{v^{n, p}_{\#\cR^{n}_{p}}}$. 
Similarly, since $\tilde{\cP}([0, t]\times \mathcal R_{p})=\#\{(s_{i}, x_{i}): x_{i}\in \cR_{p}, s_{i}\le t\}<\infty$ for each $t>0$, we can rank the elements of $\{(s_{i}, x_{i}): x_{i}\in \cR_{p}\}$ in the increasing order of their first coordinates and write the ranked (infinite) sequence as $(\tau^{p}_{1}, \chi^{p}_{1}), (\tau^{p}_{2}, \chi^{p}_{2}), \dots$. Let us also denote
\[
\delta_{n}=\sigma^{\frac{1}{k}} n^{-\frac{1}{2k}}, \quad n\ge 1.
\]

\begin{prop}
\label{prop}
For each $p\ge 1$, as $n\to\infty$, we have for all $j\ge 1$, 
\[
\bigg(\bigg(\frac{\sigma}{\sqrt{n}}\mathcal R^{n}_{p}, v^{n, p}_{1}, \dots, v^{n, p}_{j}\bigg), \delta_{n}^{-1}\eta_{v^{n, p}_{1}}, \dots, \delta_{n}^{-1}\eta_{v^{n, p}_{j}}\bigg) \xrightarrow[n\to\infty]{(d)} \Big(\big(\cR_{p}, \chi^{p}_{1}, \dots, \chi^{p}_{j}\big), \tau^{p}_{1}, \dots, \tau^{p}_{j}  \Big),
\]
where the convergence of the first coordinates is with respect to the marked Gromov--Hausdorff topology. 
\end{prop}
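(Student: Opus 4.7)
The strategy is to pass to a setting in which the reduced-tree convergence \eqref{cv-R} holds almost surely, and then treat cut times and cut locations as conditionally independent pieces. By Skorokhod representation, assume $(\sigma n^{-1/2}\cR^n_p, V^n_1, \ldots, V^n_p) \to (\cR_p, V_1, \ldots, V_p)$ almost surely in the marked Gromov--Hausdorff topology. Then for $n$ large the combinatorial shape of $\cR^n_p$ coincides with that of $\cR_p$, every rescaled edge length converges to its limiting counterpart, and $m_n := \#\cR^n_p$ satisfies $\sigma m_n / \sqrt n \to \ell(\cR_p)$.

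Conditionally on $\cR^n_p$, the cut times $\{\eta_v : v \in \cR^n_p\}$ are $m_n$ i.i.d.\ Gamma$(k,1)$ variables. One may equivalently pick a uniformly random ordering $(v^{n,p}_1, \ldots, v^{n,p}_{m_n})$ of the vertices and, independently, the order statistics $G_{(1)} < \cdots < G_{(m_n)}$ of $m_n$ i.i.d.\ Gamma$(k,1)$'s, then set $\eta_{v^{n,p}_i} = G_{(i)}$. In this representation the first $j$ cut times and the first $j$ cut locations are \emph{conditionally independent} given $\cR^n_p$, so we may treat them separately.

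For the times, apply Lemma \ref{lem: gam} with $a_{m_n} = \delta_n$, for which $m_n a_{m_n}^k = \sigma m_n / \sqrt n \to \ell(\cR_p)$: the counting process $t \mapsto \#\{i : \delta_n^{-1} G_{(i)} \le t\}$ converges in $\mathbb D(\R_+, \R)$ to a Poisson process $\cN$ of rate $\ell(\cR_p)$ read at time $t^k/k!$, whose first $j$ jump times are precisely the time coordinates of the first $j$ points of $\tilde\cP|_{[0,\infty)\times \cR_p}$. Extracting the first $j$ arrivals gives $(\delta_n^{-1} G_{(i)})_{1 \le i \le j} \to (\tau^p_1, \ldots, \tau^p_j)$ in distribution. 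For the locations, since $m_n \to \infty$ the first $j$ entries of a uniformly random permutation of $\cR^n_p$ can be coupled with $j$ i.i.d.\ uniform vertices up to vanishing total-variation error. Edge by edge in the stabilised shape, an edge of $\cR^n_p$ carrying $\Theta(\sqrt n)$ vertices has its rescaled empirical vertex measure converging to the uniform measure on the corresponding limiting edge of $\cR_p$; combining these edge-wise limits through the joint edge-length convergence shows that $j$ i.i.d.\ uniform vertices, viewed jointly with the ambient tree in the marked Gromov--Hausdorff topology, converge to $j$ i.i.d.\ points of $\cR_p$ of law $\ell/\ell(\cR_p)$, which is the distribution of the spatial coordinates $(\chi^p_1, \ldots, \chi^p_j)$.

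Assembling the two pieces via the conditional independence yields the joint convergence asserted in the proposition. The main obstacle is the location step: making the transfer ``uniform random vertex on $\cR^n_p$ $\to$ length-weighted point on $\cR_p$'' rigorous as a convergence in the marked Gromov--Hausdorff topology. This demands the stabilisation of the combinatorial shape, the convergence of the rescaled number of vertices per edge, and a coupling that simultaneously places all $j$ uniform vertices on the corresponding edges in a way that respects both the tree structure and the limiting length measure.
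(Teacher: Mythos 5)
Your proposal is correct and follows essentially the same route as the paper's own proof: exploit the exchangeability of the i.i.d.\ cut times to decouple the time-ranked vertex locations (uniform sampling without replacement, converging via \eqref{cv-R} to i.i.d.\ length-measure points of $\cR_{p}$) from the order statistics of the times, and apply Lemma \ref{lem: gam} with $m=\#\cR^{n}_{p}$, $a_{m}=\delta_{n}$ together with \eqref{cv-l} to identify the limiting counting process with $(\tilde\cP([0,t]\times\cR_{p}))_{t\ge 0}$. Your edge-by-edge discussion of the location step simply spells out in more detail what the paper dispatches in one sentence, so there is no substantive difference.
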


\begin{proof}
Since the $\eta_{v}$'s are i.i.d, the law of $(v^{n, p}_{1},  \dots, v^{n, p}_{j})$ is that of a uniform sampling without replacement, and is further independent of $(\eta_{v^{n, p}_{i}})_{1\le i\le j}$. Combined with the convergence in \eqref{cv-R}, this implies that $(v^{n, p}_{1},  \dots, v^{n, p}_{j})$ converges in distribution to $j$ independent uniform points in $\cR_{p}$, which is precisely the distribution of $\chi^{p}_{1}, \dots, \chi^{p}_{j}$. So it remains to check the convergence of $\eta_{v^{n, p}_{i}}$. Let us define 
\[
\cN_{n, p}(t)=\sum_{v\in \cR^{n}_{p}} \mathbf 1_{\{\eta_{v}\le \delta_{n}t\}}=\max\{i: \eta_{v^{n, p}_{i}}\le \delta_{n}t\}, \quad t\ge 0. 
\]
Since each $\delta_{n}^{-1}\eta_{v}$ is distributed as an independent Gamma$(k, \frac{1}{\delta_{n}})$, applying Lemma \ref{lem: gam} with $m=\#\cR^{n}_{p}$ and $a_{m}=\delta_{n}$, we obtain from \eqref{cv-l} that $(\cN_{n, p}(t))_{t\ge 0}$ converges in distribution to 
$\cN(t^{k}/k!)_{t\ge 0}$, a Poisson process of rate $\ell(\cR_{p})$. By \eqref{defcP}, the latter has the same law as 
$(\tilde\cP([0, t]\times\cR_{p}))_{t\ge 0}$. Standard results on point processes then allow us to complete the proof. 
\end{proof}

Let $T_{n}(t)$ be the subtree of $T_{n}$ formed by the vertices connected to the root at time $t$. Note that a vertex $v\in T_{n}(t)$ if and only if none of its ancestors nor $v$ itself has been removed by time $t$. Let us denote $\mu_{n}(t)=\mu_{n}(T_{n}(t))$. Recall that $\tilde\cT_{t}$ is the subtree of $\cT$ connected to the root at time $t$ from the cutting process $\tilde{\cP}$. 
Proposition \ref{prop} implies the following
\begin{lem}
As $n\to\infty$, jointly with the convergence in \eqref{cv-R}, we have $(\mu_{n}(\delta_{n}t))_{t\ge 0}$ converging to $(\mu(\tilde\cT_{t}))_{t\ge 0}$ in distribution with respect to the Skorokhod topology on $\mathbb D(\R_{+}, \R)$. 
\end{lem}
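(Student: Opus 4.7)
The plan is to first establish convergence of the finite-dimensional distributions of $(\mu_{n}(\delta_{n}t))_{t\ge 0}$ to those of $(\mu(\tilde\cT_{t}))_{t\ge 0}$, and then upgrade to Skorokhod convergence by exploiting the fact that both processes are non-increasing $[0,1]$-valued functions.

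For the finite-dimensional part, I would approximate the masses using empirical averages over the i.i.d.\ uniform samples $(V^{n}_{i})_{i\ge 1}$ and $(V_{i})_{i\ge 1}$ already introduced. For each $p\in \N$, set
\[
M^{n}_{p}(t)=\frac{1}{p}\sum_{i=1}^{p}\mathbf 1_{\{V^{n}_{i}\in T_{n}(\delta_{n}t)\}}, \qquad M_{p}(t)=\frac{1}{p}\sum_{i=1}^{p}\mathbf 1_{\{V_{i}\in \tilde\cT_{t}\}}.
\]
Conditionally on $T_{n}$ and on the Poisson clocks $(N_{v})$, the $V^{n}_{i}$ are i.i.d.\ uniform, so $\mathbb E[M^{n}_{p}(t)\mid T_{n},(N_{v})]=\mu_{n}(\delta_{n}t)$ and $\mathrm{Var}(M^{n}_{p}(t)\mid\cdot)\le 1/(4p)$; an analogous bound holds for $M_{p}(t)$. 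Hence it suffices to prove that, for every fixed $p$ and every fixed $0<t_{1}<\dots<t_{l}$,
\[
\big(M^{n}_{p}(t_{1}),\dots,M^{n}_{p}(t_{l})\big)\xrightarrow[n\to\infty]{(d)} \big(M_{p}(t_{1}),\dots,M_{p}(t_{l})\big),
\]
jointly with \eqref{cv-R}, and then to let $p\to\infty$.

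Since $\cR^{n}_{p}$ is the smallest subgraph of $T_{n}$ containing the root and the $V^{n}_{i}$'s, the entire path from $\rho_{n}$ to each $V^{n}_{i}$ lies inside $\cR^{n}_{p}$, and consequently
\[
\{V^{n}_{i}\in T_{n}(\delta_{n}t)\}=\{\eta_{v}>\delta_{n}t\ \text{for every }v\text{ on the path from }\rho_{n}\text{ to }V^{n}_{i}\text{ in }\cR^{n}_{p}\}.
\]
This event is a functional of the marked reduced tree $(\tfrac{\sigma}{\sqrt n}\cR^{n}_{p},V^{n}_{1},\dots,V^{n}_{p})$ together with the ranked rescaled cut times $(\delta_{n}^{-1}\eta_{v^{n,p}_{j}})_{j}$ which is continuous on the almost-sure event that the limit cut times $\tau^{p}_{j}$ are pairwise distinct and distinct from the $t_{i}$'s. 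Proposition \ref{prop} then yields the desired joint distributional convergence of $M^{n}_{p}$ to $M_{p}$.

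For the passage to Skorokhod convergence, I would use that $t\mapsto \mu_{n}(\delta_{n}t)$ and $t\mapsto \mu(\tilde\cT_{t})$ are non-increasing càdlàg functions, each bounded by $1$. For such monotone processes, convergence of the finite-dimensional marginals at all continuity points of the limit implies convergence in the Skorokhod topology on $\mathbb D(\R_{+},\R)$ (this is the classical argument for convergence of distribution functions). The limit $t\mapsto \mu(\tilde\cT_{t})$ has at most countably many discontinuities almost surely (one for each atom of $\tilde\cP$ whose removal disconnects a positive-$\mu$-mass branch), so continuity points are dense; the fact that $\mu(\tilde\cT_{t})\to 0$ as $t\to\infty$ handles tightness at infinity.

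The main obstacle I expect is the continuity check alluded to above: one must verify that $\{V^{n}_{i}\in T_{n}(\delta_{n}t)\}$ depends continuously on the marked reduced tree and the rescaled cut times. This requires identifying, in the limit $\cR_{p}$, which edges lie on the path from the root to each $V_{i}$, and arguing that the combinatorial shape together with the assignment of $V^{n}_{i}$ to a branch stabilises under the marked Gromov--Hausdorff convergence of Proposition \ref{prop}. The bookkeeping is routine but needs some care, especially to rule out the measure-zero bad configurations where cut times coincide or a $V_{i}$ sits exactly at a branch point.
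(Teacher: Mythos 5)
Your proposal follows essentially the same route as the paper's (sketched) proof: approximate both masses by empirical averages over the i.i.d.\ samples $(V^{n}_{i})$, $(V_{i})$, reduce the indicator events to the reduced trees and the ranked cut times so that Proposition \ref{prop} gives the joint distributional convergence, and then use monotonicity to upgrade finite-dimensional convergence to Skorokhod convergence. The only difference is cosmetic: you handle the double limit via a uniform conditional variance bound of order $1/p$ and let $p\to\infty$ after $n$, whereas the paper picks a sequence $k_{n}\to\infty$ slowly enough; both are standard and correct.
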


\begin{proof}
The arguments are similar to the ones in Section 2.3, \cite{AP98}, so we'll only sketch the proof. Recall that $(V^{n}_{i})_{i\ge 1}$ (resp.~$(V_{i})_{i\ge 1}$) is a sequence of i.i.d.~uniform vertices of $T_{n}$ (resp.~i.i.d.~points of $\cT$ with law $\mu$). By Law of Large Numbers, we have for each $t>0$, 
\[
\frac{1}{j}\sum_{i=1}^{j}\mathbf 1_{\{V^{n}_{i}\in T_{n}(t)\}}\xrightarrow[j\to\infty]{\text{a.s.}} \mu_{n}(t) \quad \text{and} \quad \frac{1}{j}\sum_{i=1}^{j}\mathbf 1_{\{V_{i}\in \tilde\cT_{t}\}}\xrightarrow[j\to\infty]{\text{a.s.}} \mu(\tilde\cT_{t}) .
\]
On the other hand, $V^{n}_{i}\in T_{n}(t)$ if and only if the first $\eta_{v}$ for those $v$ in the path from the root to $V^{n}_{j}$ arrives after $t$. Therefore, according to Proposition \ref{prop}, for each $j\ge 1$, 
\[
\Big(\mathbf 1_{\{V^{n}_{i}\in T_{n}(\delta_{n}t)\}}, 1\le i\le j\Big) \xrightarrow[n\to\infty]{(d)} \Big(\mathbf 1_{\{V_{i}\in \cT(t)\}}, 1\le i\le j\Big) 
\]
It follows that we can find a sequence $k_{n}\to\infty$ slowly enough such that
\[
\frac{1}{k_{n}}\sum_{i=1}^{k_{n}}\mathbf 1_{\{V^{n}_{i}\in T_{n}(\delta_{n}t)\}} \xrightarrow[n\to\infty]{(d)} \mu(\tcT_{t}),
\]
jointly with \eqref{cv-R}. 
Invoking Law of Large Numbers again, we deduce that $\mu_{n}(\delta_{n}t)\to \mu(\tcT_{t})$ in distribution, jointly with \eqref{cv-R}. These arguments can also be adapted to prove the convergence of the multidimentional marginals. The functional convergence then follows thanks to monotonicity. 
\end{proof}

By the Skorokhod representation, we can assume from now on that jointly with \eqref{cv-R}, we have
\begin{equation}
\label{cv-mass}
\big(\mu_{n}(\delta_{n}t)\big)_{t\ge 0} \xrightarrow{n\to\infty} \big(\mu(\tilde\cT_{t})\big)_{t\ge 0} \quad \text{a.s.~in } \mathbb D(\R_{+}, \R).
\end{equation}

\subsection{Records and numbers of cuts}

Recall the Poisson process $N_{v}$ associated to each vertex $v\in T_{n}$. Let us write $\eta_{v, r}=\inf\{t: N_{v}(t)=r\}$ for the $r$-th jump of $N_{v}$; in particular, $\eta_{v, k}=\eta_{v}$. 
For $r=1, \cdots, k$, we say $v$ is a {\it $r$-record} if $v$ is still connected to the root at time $\eta_{v, r}$. Denote by $X_{k, r}(T_{n})$ the total number of $r$-records in $T_{n}$. Clearly, $X_{k}(T_{n})=\sum_{1\le r\le k}X_{k, r}(T_{n})$. On the other hand, as pointed out in Lemma 6 of \cite{CHDS19}, we have 
\begin{equation}
\label{record}
n^{-1+\frac{1}{2k}}\big(X_{k}(T_{n})-X_{k, 1}(T_{n})\big) \xrightarrow{n\to\infty} 0 \quad \text{in probability}, 
\end{equation}
so that we only need to look for the scaling limit of $X_{k, 1}(T_{n})$. To that end, let us introduce $a_{n}(t)=\#\{v\in T_{n}(t): N_{v}(t)=0\}$. Standard tools from stochastic analysis yield the following
\begin{lem}
\label{mart}
For all $n\ge 1$, we have
\[
\mathbb E\Big[\Big(X_{k, 1}(T_{n})-\int_{0}^{\infty} a_{n}(t)dt \Big)^{2}\Big]=\mathbb E\Big[\int_{0}^{\infty}a_{n}(t)dt\Big]. 
\]
\end{lem}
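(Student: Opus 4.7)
The plan is to recognise $X_{k,1}(T_n) - \int_0^\infty a_n(t)\,dt$ as the terminal value of a purely discontinuous $L^2$-martingale driven by the Poisson processes $\{N_v\}$, and to compute its second moment via the predictable quadratic variation.

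For each $v \in T_n$, let $C_v(t) = \mathbf 1_{\{\eta_{v,1} \le t,\, v \in T_n(\eta_{v,1})\}}$ be the indicator that $v$ has been realised as a $1$-record by time $t$, so $\sum_v C_v(\infty) = X_{k,1}(T_n)$. With respect to the natural filtration $\mathcal F_t = \sigma(N_u(s): u \in T_n,\, s \le t)$, the event $\{v \in T_n(s-)\}$ is predictable (it depends only on the first jump times of the ancestors of $v$), while the first-jump rate of the independent process $N_v$ at time $s$ equals $\mathbf 1_{\{N_v(s-)=0\}}$. Multiplying yields the $\mathcal F_t$-compensator
\[
A_v(t) = \int_0^t \mathbf 1_{\{v \in T_n(s),\, N_v(s)=0\}}\,ds,
\]
so $M_v := C_v - A_v$ is a martingale with a single possible jump of size $1$. (Since $k\ge 2$, the distinction between $s$ and $s-$ at the jump time $\eta_{v,1}$ is immaterial.)

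Summing over $v$, $M(t) := \sum_{v} M_v(t) = \sum_v C_v(t) - \int_0^t a_n(s)\,ds$ is a martingale with terminal value $X_{k,1}(T_n) - \int_0^\infty a_n(s)\,ds$; integrability is clear because $X_{k,1}(T_n)\le n$ and $\int_0^\infty a_n(s)\,ds \le n\,\eta_\rho$, the procedure terminating at the finite time $\eta_\rho$ where $\eta_\rho$ has all moments. Because the Poisson processes $N_v,N_{v'}$ for $v\ne v'$ almost surely share no jump times, the martingales $M_v,M_{v'}$ are orthogonal, giving
\[
\langle M\rangle(t) \;=\; \sum_v \langle M_v\rangle(t) \;=\; \sum_v A_v(t) \;=\; \int_0^t a_n(s)\,ds.
\]
The identity $\mathbb E[M(\infty)^2] = \mathbb E[\langle M\rangle(\infty)]$ for this square-integrable martingale then delivers the claim.

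The main point that deserves care is the justification of the compensator formula: the event $\{v\in T_n(s)\}$ couples $v$ to all of its ancestors through their Poisson processes, so one must verify both its predictability and the factorisation of the local intensity of $C_v$. This is standard once one conditions on the $\sigma$-algebra generated by the ancestors' Poisson processes and uses the independence of $N_v$, but it is worth writing down carefully. Orthogonality, the $L^2$-bound for $\sup_t |M(t)|$, and the passage to $t=\infty$ are then routine.
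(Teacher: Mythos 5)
Your proposal is correct and follows essentially the same route as the paper: identify $\int_0^t a_n(s)\,ds$ as the compensator of the record-counting process, apply the $L^2$-martingale second-moment identity, and let $t\to\infty$ using $\mathbb E[\int_0^\infty a_n(s)\,ds]<\infty$. The only difference is that you spell out, via the per-vertex decomposition and orthogonality, the compensator computation that the paper dismisses as classic.
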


\begin{proof}
For $t>0$, let us denote 
\[
\mathcal X_{n}(t)=\sum_{v\in T_{n}}\mathbf 1_{\{\eta_{v, 1}\le t\}}\mathbf 1_{\{v \text{ is a 1-record}\}}, 
\]
the number of $1$-records which have occurred by time $t$. Clearly, $\mathcal X_{n}(\infty)=X_{k, 1}(T_{n})$. 
Note that $\eta_{v, 1}$ is distributed as an exponential variable with mean $1$. It is then classic that 
\[
M_{t}=\mathcal X_{n}(t)-\int_{0}^{t}a_{n}(s)ds, \quad t\ge 0,
\]
is a martingale which further satisfies that $\mathbb E[M_{t}^{2}]=\mathbb E[\int_{0}^{t}a_{n}(s)ds]$. In the terminology of point processes, this is saying that $(\int_{0}^{t}a_{n}(s)ds)_{t\ge 0}$ is the compensator of $(\mathcal X_{n}(t))_{t\ge 0}$. On the other hand, for each fixed $n$, one can easily convince oneself that $\mathbb E[\int_{0}^{\infty}a_{n}(s)ds]<\infty$. Therefore, $(M_{t})_{t\ge 0}$ is also bounded in $L^{2}$. Taking $t\to\infty$ yields the desired result. 
\end{proof}

\begin{lem}
\label{a-cv}
For each $t>0$, $\frac1n a_{n}(\delta_{n}t)-\mu_{n}(\delta_{n}t) \to 0$ in $L^{1}$. 
\end{lem}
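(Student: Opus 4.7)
The plan starts from the observation that any $v\in T_n(t)$ must have $N_v(t)<k$ (otherwise $v$ would already have been removed), so $a_n(t)$ counts only those $v\in T_n(t)$ with $N_v(t)=0$, and in particular $a_n(t)\le \#T_n(t)=n\mu_n(t)$. Thus $\mu_n(\delta_n t) - \frac{1}{n}a_n(\delta_n t)\ge 0$ almost surely, and the $L^1$ statement reduces to showing that the expectation of this difference tends to $0$.

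By Fubini, conditionally on $T_n$,
\[
\mathbb{E}\Big[\mu_n(\delta_n t) - \tfrac{1}{n}a_n(\delta_n t)\,\Big|\,T_n\Big] = \frac{1}{n}\sum_{v\in T_n}\mathbb{P}\big(v\in T_n(\delta_n t),\,N_v(\delta_n t)\ge 1\,\big|\,T_n\big).
\]
Writing $P_v$ for the set of vertices on the path from the root down to $v$ (including $v$) and $d_v=\#P_v$, the event $\{v\in T_n(t)\}$ equals $\bigcap_{w\in P_v}\{N_w(t)<k\}$, while its intersection with $\{N_v(t)\ge 1\}$ replaces only the factor at $v$ by $\{1\le N_v(t)<k\}$. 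By independence of the Poisson processes $(N_w)_{w\in T_n}$,
\[
\mathbb{P}(v\in T_n(t)\,|\,T_n)=q(t)^{d_v}, \qquad \mathbb{P}(v\in T_n(t),\,N_v(t)\ge 1\,|\,T_n)=q(t)^{d_v-1}\,r(t),
\]
where $q(t):=\mathbb{P}(\mathrm{Poisson}(t)<k)$ and $r(t):=\mathbb{P}(1\le \mathrm{Poisson}(t)<k)$. Summing over $v$ and factoring out $r(t)/q(t)$ (which does not depend on $v$) yields
\[
\mathbb{E}\Big[\mu_n(\delta_n t) - \tfrac{1}{n}a_n(\delta_n t)\,\Big|\,T_n\Big] = \frac{r(\delta_n t)}{q(\delta_n t)}\,\mathbb{E}[\mu_n(\delta_n t)\,|\,T_n].
\]

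Taking full expectations and using $\mu_n\le 1$, it suffices to show that $r(\delta_n t)/q(\delta_n t)\to 0$. Since $k\ge 2$ and $\delta_n=\sigma^{1/k}n^{-1/(2k)}\to 0$, Taylor expansion at $s=0$ gives $r(s)=s+O(s^2)$ and $q(s)=1-O(s^k)$, so the ratio is $O(\delta_n t)$ and vanishes. The argument is entirely elementary once the factorisation is in place; the only mildly delicate point is to correctly isolate the factor at $v$ itself from those at its strict ancestors when invoking independence, but no real obstacle is anticipated.
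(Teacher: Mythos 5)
Your proof is correct, but it takes a slightly different route from the paper's. The paper conditions on the retained mass $\mu_{n}(\delta_{n}t)$ and observes that, given the surviving subtree, $a_{n}(\delta_{n}t)$ is binomially thinned among the $n\mu_{n}(\delta_{n}t)$ surviving vertices, which gives $\mathbb E\big|\frac1n a_{n}(\delta_{n}t)-\mu_{n}(\delta_{n}t)\big|\le \mathbb E[\mu_{n}(\delta_{n}t)](1-e^{-\delta_{n}t})\le \delta_{n}t$ in two lines. You instead condition on $T_{n}$ only and factorise the survival probabilities along root-to-vertex paths, writing $\mathbb P(v\in T_{n}(t)\mid T_{n})=q(t)^{d_{v}}$ and extracting the exact factor $r(\delta_{n}t)/q(\delta_{n}t)=O(\delta_{n}t)$. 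The two arguments rest on the same elementary idea (given that a vertex is still attached, the conditional probability that it has already received a cut is $O(\delta_{n}t)$), so the difference is mainly one of bookkeeping; your version is a bit longer but handles the "delicate point" you flag more scrupulously than the paper does: strictly, given survival the success probability in the paper's binomial is $e^{-t}/\mathbb P(\mathrm{Poisson}(t)<k)\ge e^{-t}$ rather than $e^{-t}$, exactly the $r/q$ correction you compute, and this only improves the bound, so both proofs stand. Two trivial remarks: the sign observation $\frac1n a_{n}\le\mu_{n}$ lets you drop absolute values, as you note; and you do not really need the Taylor expansion or $k\ge 2$, since $r(s)\le 1-e^{-s}\le s$ and $q(s)\to 1$ suffice for every $k\ge 1$.
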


\begin{proof}
Conditional on $\mu_{n}(t)$, $a_{n}(t)$ is distributed as Binomial$(n\,\mu_{n}(t), e^{-t})$. Hence,
 \[
 \mathbb E\Big[\Big|\frac1n a_{n}(\delta_{n}t)-\mu_{n}(\delta_{n}t)\Big|\Big]=\mathbb E[\mu_{n}(\delta_{n}t)](1-e^{-\delta_{n}t})\le \delta_{n}t\to 0,
 \]
 as $n\to\infty$. 
\end{proof}

\begin{lem}
\label{lem: bd}
We have 
\[
\lim_{t\to\infty}\limsup_{n\ge 1}\mathbb E\Big[\int_{t}^{\infty}\mu_{n}(\delta_{n}s)ds\Big]=0.
\]
\end{lem}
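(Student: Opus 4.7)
My plan is to express the integral as a sum over vertices of $T_n$ via Fubini, bound each term using a Gamma-tail inequality, and then pass to the Brownian CRT limit via Aldous's theorem combined with dominated convergence. Since the clocks $(\eta_v)_{v\in T_n}$ are i.i.d.~$\text{Gamma}(k,1)$ and a vertex $v$ is connected to the root at time $u$ iff every ancestor clock (including $v$'s own) exceeds $u$, writing $g(u):=\mathbb{P}(\text{Gamma}(k,1)>u)$ and $|v|$ for the number of vertices on the path from $\rho_n$ to $v$, we have $\mathbb{P}(v\in T_n(u)\mid T_n)=g(u)^{|v|}$. Fubini and the substitution $u=\delta_n s$ give
\[
\mathbb{E}\Big[\int_t^{\infty}\mu_n(\delta_n s)\,ds\Big]=\frac{1}{n\delta_n}\,\mathbb{E}\Big[\sum_{v\in T_n}\int_{\delta_n t}^{\infty}g(u)^{|v|}\,du\Big].
\]

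The key analytic input is the elementary estimate $1-g(u)=\int_0^u \frac{x^{k-1}}{(k-1)!}e^{-x}\,dx\ge \frac{u^k}{k!}e^{-u}$, which yields $g(u)\le e^{-c_k u^k}$ on $[0,1]$ with $c_k:=(e\cdot k!)^{-1}$, and $g(u)\le g(1)<1$ for $u\ge 1$. Setting $H_v:=\sigma|v|/\sqrt{n}$ (so that $\delta_n^k|v|=H_v$) and $F(a):=\int_a^{\infty}e^{-w^k}\,dw$, the change of variable $w=(c_k|v|)^{1/k}u$ produces, for $n$ large enough that $\delta_n t\le 1$,
\[
\int_{\delta_n t}^{\infty}g(u)^{|v|}\,du\le c_k^{-1/k}\sigma^{1/k}n^{-\frac{1}{2k}}H_v^{-1/k}F\big(c_k^{1/k}tH_v^{1/k}\big)+k\,g(1)^{|v|-1}.
\]
Summing over $v$ and dividing by $n\delta_n=\sigma^{1/k}n^{1-\frac{1}{2k}}$, the second contribution is $O(1/(n\delta_n))\to 0$, since $\mathbb{E}[\sum_v g(1)^{|v|}]$ is uniformly bounded in $n$ by classical estimates on the height profile of a conditioned critical Galton--Watson tree (in the Aldous rescaling this sum behaves like $\frac{n}{\sigma}\int_0^\infty \mathbb{E}[L_x]\,e^{-\alpha x\sqrt{n}/\sigma}dx$, where $L_x$ is the local time of the Brownian excursion and $\alpha=-\log g(1)$, which is $O(1)$ thanks to $\mathbb{E}[L_x]=O(x)$ near $0$). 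The first contribution equals $c_k^{-1/k}\,\mathbb{E}\big[\frac{1}{n}\sum_v H_v^{-1/k}F(c_k^{1/k}tH_v^{1/k})\big]$.

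To conclude, Aldous's theorem~\eqref{cv-tree} implies the empirical measure of $\{H_v:v\in T_n\}$ converges to the occupation measure $\int_0^1\delta_{e_s}\,ds$, and, granted the appropriate uniform integrability,
\[
\limsup_n\mathbb{E}\Big[\frac{1}{n}\sum_v H_v^{-1/k}F\big(c_k^{1/k}tH_v^{1/k}\big)\Big]\le \mathbb{E}\Big[\int_0^1 e_s^{-1/k}F\big(c_k^{1/k}te_s^{1/k}\big)\,ds\Big].
\]
The function $e_s^{-1/k}$ is a.s.~integrable on $[0,1]$ (since $e_s\sim\sqrt{s}$ near the endpoints), $F$ is bounded, and $F(c_k^{1/k}te_s^{1/k})\to 0$ for a.e.~$s$ as $t\to\infty$; dominated convergence yields the claimed limit.

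The main obstacle is the uniform integrability allowing the last $\limsup$--expectation interchange: the integrand $H_v^{-1/k}$ is unbounded near $H_v=0$, so one must control the contribution of shallow vertices. This can be handled either by truncating at $H_v\le\varepsilon$ and bounding the small generations $Z_h$ with $h\le\varepsilon\sqrt{n}/\sigma$ using classical profile estimates (then sending $\varepsilon\to 0$), or by bootstrapping from Proposition~\ref{prop: moments} applied with $q=1$, which provides an $n$-uniform $L^1$ bound on $\int_0^\infty\mu_n(\delta_n s)\,ds$ and hence the needed dominating function.
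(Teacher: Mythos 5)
Your Fubini decomposition and the Gamma-tail estimates are sound and essentially coincide with the paper's elementary inputs (the lower bound $1-g(u)\ge \frac{u^k}{k!}e^{-u}$ and an exponential tail for $u\ge 1$), and the algebra reducing the main term to $c_k^{-1/k}\,\mathbb E\big[\frac1n\sum_v H_v^{-1/k}F(c_k^{1/k}tH_v^{1/k})\big]$ is correct. The genuine gap is exactly the step you flag and then defer: the lemma is a statement about $\limsup_n$ of \emph{expectations}, so passing to the almost-sure CRT limit of the height profile buys nothing unless you already have a uniform-in-$n$ control of the contribution of shallow vertices (where $H_v^{-1/k}$ blows up); that control \emph{is} the content of the lemma. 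Of your two proposed remedies, the second does not work: Proposition \ref{prop: moments} concerns the continuum variable $X_k(\cT)$ conditionally on $\e$ and says nothing about the discrete quantity $\int_0^\infty\mu_n(\delta_n s)\,ds$; extracting an $n$-uniform $L^1$ bound on the discrete trees from it would require precisely the uniform integrability in question (indeed, the present lemma is what licenses the discrete-to-continuum comparison in the proof of Proposition \ref{prop'}), so that route is circular. Your local-time justification of the $u\ge 1$ term has the same defect: ``behaves like'' the rescaled excursion local time again presupposes a uniform control of the profile.

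Your first remedy does close the argument, but note that once you invoke the needed ``classical profile estimate'' --- which is $\mathbb E[Z_m(T_n)]\le Cm$ uniformly in $n$ and $m$, i.e.\ Theorem 1.13 of \cite{Ja06}, the very input the paper uses --- the detour through the occupation measure becomes superfluous. Concretely, with that bound the shallow part $\{H_v\le\varepsilon\}$ of your main term is at most a constant times $\varepsilon^{2-1/k}$ uniformly in $n$ and $t$, the deep part is at most $\varepsilon^{-1/k}F(c_k^{1/k}t\varepsilon^{1/k})\to 0$ as $t\to\infty$ uniformly in $n$, and the $u\ge 1$ term is handled by $\mathbb E[\sum_v g(1)^{|v|-1}]\le C\sum_m m\,g(1)^{m-1}<\infty$. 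This is in substance the paper's proof, which works directly with $n\,\mathbb E[\mu_n(t)]=\sum_m p(t)^m\mathbb E[Z_m(T_n)]\le Cp(t)/(1-p(t))^2$ and never passes to the limit tree. So the skeleton of your argument is fine, but as written the proof is incomplete, and the only viable completion you offer reduces to the paper's route rather than providing a genuinely different one.
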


\begin{proof}
The first part of the proof is identical to that of Lemma 3 in \cite{BM13}. We include it here for the sake of completeness. 
Let $p(t)=\mathbb P(\eta_{v}>t)$ be the probability that $v$ is not removed at time $t$. We note that  $v\in T_{n}(t)$ if and only if $N_{w}>t$, for every vertex $w$ in the path from the root to $v$. Letting $\Ht(v)$ be the number of vertices in that path, we can write
\begin{align}
\mathbb E[n\,\mu_{n}(t)]= \mathbb E_{n}\bigg[\sum_{v\in T_{n}}\mathbb P(v\in T_{n}(t)\,|\,T_{n})\bigg] = \mathbb E\bigg[\sum_{v\in T_{n}} p(t)^{\Ht(v)}\bigg]=\sum_{m\ge 1}p(t)^{m} \mathbb E[Z_{m}(T_{n})],
\end{align}
where $Z_{m}(T_{n})=\#\{v\in T_{n}: \Ht(v)=m\}$. Now according to Theorem 1.13 in \cite{Ja06}, there exists some constant $C\in (0, \infty)$ which only depends on the offspring distribution $\xi$ such that  $\mathbb E[Z_{m}(T_{n})]\le C m$ for all $n$ and $m$. It follows that
\[
n\,\mathbb E[\mu_{n}(t)]\le C\sum_{m\ge 1} m\, p(t)^{m} =  \frac{Cp(t)}{(1-p(t))^{2}}.
\]
On the other hand, since $\eta_{v}$ has the same distribution as the sum of $k$ independent exponential variables of mean $1$, we deduce the bound $p(t)\le k \exp(-t/k)$. For small values of $t$, we will use instead:
\[
1-p(t)=\mathbb P(\eta_{v}\le t)=\int_{0}^{t} \frac{s^{k-1}}{(k-1)!}e^{-s}ds\ge e^{-t}\int_{0}^{t}\frac{s^{k-1}}{(k-1)!}ds= \frac{t^{k}}{k!}e^{-t}, \quad t\ge 0. 
\]
Let $t_{0}$ be such that $k\exp(-t_{0}/k)<1$. Applying the previous bounds, we find that for $n$ large enough, 
\begin{align*}
\mathbb E\Big[\int_{t}^{\infty}\mu_{n}(\delta_{n}s)ds\Big]&\le \frac{C}{n}\int_{t}^{\infty} \frac{p(\delta_{n}s)}{(1-p(\delta_{n}s))^{2}}\\
&\le \frac{C}{n}\int_{t}^{t_{0}/\delta_{n}} \frac{ds}{e^{-2\delta_{n}s}(\delta_{n}s)^{2k}/(k!)^{2}} + \frac{C}{n} \int_{t_{0}/\delta_{n}}^{\infty} \frac{ke^{-\delta_{n}s/k}}{(1-ke^{-\delta_{n}s/k})^{2}}ds \\
&\le \frac{C(k!)^{2}e^{2t_{0}}}{n\,\delta_{n}^{2k}}\,t^{-2k+1}+ \frac{Ck^{2}}{n\,\delta_{n}}\frac{e^{-t_{0}/k}}{1-ke^{-t_{0}/k}}\,, 
\end{align*}
where we have used a change of variable $u=ke^{-\delta_{n}t/k}$ to compute the integral over $[t_{0}/\delta_{n}, \infty)$. 
Since $n\, \delta_{n}^{2k} =\sigma^{2}$ and  $n\delta_{n}\to \infty$, the conclusion follows. 
\end{proof}

\begin{prop}
\label{prop'}
As $n\to\infty$, we have the joint convergence
\begin{equation}
\label{tt}
\Big(\frac{\sigma}{\sqrt{n}}T_{n}, \, \frac{1}{n\delta_{n}}X_{k}(T_{n})\Big) \xrightarrow{(d)} \big(\cT, X_{k}(\cT)\big), 
\end{equation}
where the convergence of the first coordinate is in the Gromov--Hausdorff sense. 
\end{prop}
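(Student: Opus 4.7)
The plan is to chain together the results already established in Section \ref{sec3} so as to reduce $\frac{1}{n\delta_n}X_k(T_n)$ to the functional $\int_0^\infty \mu_n(\delta_n u)\,du$, and then invoke the Skorokhod coupling \eqref{cv-mass} together with the tail bound of Lemma \ref{lem: bd}. First, using \eqref{record} (and the fact that $n^{-1+1/(2k)} = (n\delta_n/\sigma^{1/k})^{-1}/\sigma^{\cdot}$, i.e.\ the same scaling as in \eqref{tt}), I replace $X_k(T_n)$ by the number of $1$-records $X_{k,1}(T_n)$ without affecting the limit.

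Next, I apply the martingale identity of Lemma \ref{mart}. A change of variable $t=\delta_n u$ turns the compensator into $\int_0^\infty a_n(t)\,dt = \delta_n\int_0^\infty a_n(\delta_n u)\,du$, so that
\[
\frac{1}{n\delta_n}\int_0^\infty a_n(t)\,dt = \frac{1}{n}\int_0^\infty a_n(\delta_n u)\,du.
\]
The bound $a_n(t)\le n\mu_n(t)$ combined with Lemma \ref{lem: bd} yields $\mathbb E\bigl[\int_0^\infty a_n(t)\,dt\bigr]\le Cn\delta_n$ for some constant $C$ independent of $n$. Lemma \ref{mart} then gives
\[
\mathbb E\Big[\Big(\tfrac{1}{n\delta_n}\bigl(X_{k,1}(T_n)-\!\!\int_0^\infty a_n(t)\,dt\bigr)\Big)^2\Big]\le \frac{C}{n\delta_n}\xrightarrow[n\to\infty]{} 0,
\]
since $n\delta_n=\sigma^{1/k}n^{1-1/(2k)}\to\infty$. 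Consequently it suffices to establish the joint convergence of $\frac{1}{n}\int_0^\infty a_n(\delta_n u)\,du$ to $X_k(\cT)=\int_0^\infty\mu(\tilde\cT_u)\,du$.

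I then swap $\frac{1}{n}a_n(\delta_n u)$ for $\mu_n(\delta_n u)$: by Lemma \ref{a-cv}, for each fixed $T>0$,
\[
\mathbb E\Big[\int_0^T\bigl|\tfrac{1}{n}a_n(\delta_n u)-\mu_n(\delta_n u)\bigr|\,du\Big]\le \int_0^T \delta_n u\,du=\tfrac{1}{2}\delta_n T^2\to 0,
\]
while for the tail, $\frac{1}{n}a_n\le \mu_n$, so Lemma \ref{lem: bd} controls $\int_T^\infty$ uniformly in $n$, with the bound going to zero as $T\to\infty$. Working under the coupling \eqref{cv-mass}, the a.s.\ Skorokhod convergence $\mu_n(\delta_n\cdot)\to\mu(\tilde\cT_\cdot)$ implies $\int_0^T\mu_n(\delta_n u)\,du\to\int_0^T\mu(\tilde\cT_u)\,du$ a.s.\ (integration against Lebesgue measure is continuous on Skorokhod space). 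Taking $T\to\infty$ and using the uniform tail control from Lemma \ref{lem: bd} on both sides gives $\int_0^\infty\mu_n(\delta_n u)\,du\to X_k(\cT)$, jointly with the tree convergence since \eqref{cv-mass} is joint with \eqref{cv-R}.

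The main obstacle is the last step: the convergence $\mu_n(\delta_n\cdot)\to\mu(\tilde\cT_\cdot)$ in $\mathbb D(\R_+,\R)$ only controls the mass process on compacts, whereas $X_k(\cT)$ is an integral over $[0,\infty)$. Without Lemma \ref{lem: bd}, the tails of $\mu_n(\delta_n u)$ could conceivably produce mass that escapes in the limit, and it is precisely the quantitative decay $\mathbb E[\mu_n(\delta_n s)]=O((n\delta_n^{2k})^{-1}s^{-2k}) = O(s^{-2k})$ obtained there that rules this out and furnishes the uniform integrability needed to pass to the limit.
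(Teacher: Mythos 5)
Your proposal is correct and follows essentially the same route as the paper: reduce to $X_{k,1}(T_n)$ via \eqref{record}, use the compensator identity of Lemma \ref{mart} together with $\mathbb E[\int_0^\infty a_n(t)\,dt]=O(n\delta_n)$, swap $\frac1n a_n(\delta_n\cdot)$ for $\mu_n(\delta_n\cdot)$ via Lemma \ref{a-cv}, and pass to the limit under the coupling \eqref{cv-mass} with tail control from Lemma \ref{lem: bd} (the paper handles the compact-interval step by dyadic Riemann sums rather than integrating the $L^1$ bound, and controls the limit's tail via \eqref{bd-X} rather than Fatou, both immaterial differences). The only point you gloss is the final upgrade from convergence jointly with the reduced trees in \eqref{cv-R} to the stated joint convergence with $\frac{\sigma}{\sqrt n}T_n$ in the Gromov--Hausdorff sense, which the paper settles by observing that the family $(\cR_p)_{p\ge 1}$ determines the law of $(\cT,d,\mu)$, so the limit point of the pair is unique.
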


\begin{proof}
We first note that $\mathbb E[X_{k}(\cT)]<\infty$ as a consequence of \eqref{bd-X} and the fact that a Rayleigh distribution has finite mean. Together with Lemma \ref{lem: bd}, this implies that for $\epsilon>0$, we can find $t_{0}=t_{0}(\epsilon)\in (0, \infty)$ such that 
\begin{equation}
\label{T}
\mathbb E\Big[\int_{t_{0}}^{\infty} \mu(\tcT_{t})dt\Big]<\epsilon \quad \text{and} \quad \mathbb E\Big[\int_{t_{0}}^{\infty} \frac1n a_{n}(\delta_{n}t)dt\Big]\le \mathbb E\Big[\int_{t_{0}}^{\infty} \mu_{n}(\delta_{n}t)dt\Big]<\epsilon, \text{ for all } n\ge 1. 
\end{equation}
Let $m\in \N$ and take $M\in\N$ large enough such that $M2^{-m}\ge t_{0}$. Since $t\mapsto a_{n}(t)$ is non increasing, we have
\[
2^{-m}\sum_{j=1}^{M} a_{n}\Big(\frac{j\,\delta_{n}}{2^{m}}\Big) \le \int_{0}^{M/2^{m}} a_{n}(\delta_{n}t)dt =\sum_{j=1}^{M}\int_{(j-1)/2^{m}}^{j/2^{m}}a_{n}(\delta_{n}t)dt \le 2^{-m}\sum_{j=1}^{M} a_{n}\Big(\frac{(j-1)\delta_{n}}{2^{m}}\Big). 
\]
Replacing $a_{n}(\delta_{n}t)$ with $\mu(\tcT_{t})$ yields a similar bound for $\int_{0}^{M/2^{m}}\mu(\tcT_{t})dt$. Then,
\begin{align*}
\Big|\int_{0}^{M/2^{m}} \frac1n a_{n}(\delta_{n}t)dt -\int_{0}^{M/2^{m}} \mu(\tcT_{t})dt \Big|\le 2^{-m+1}+2^{-m+1}\sum_{j=1}^{M}\Big|\frac1n a_{n}\Big(\frac{j\,\delta_{n}}{2^{m}}\Big)-\mu\Big(\tcT_{\frac{j}{2^{m}}}\Big)\Big|\\
\le 2^{-m+1}+2^{-m+1}\sum_{j=1}^{M}\Big\{\Big|\frac1n a_{n}\Big(\frac{j\,\delta_{n}}{2^{m}}\Big)-\mu_{n}\Big(\frac{j\,\delta_{n}}{2^{m}}\Big)\Big|+\Big|\mu_{n}\Big(\frac{j\,\delta_{n}}{2^{m}}\Big)-\mu\Big(\tcT_{\frac{j}{2^{m}}}\Big)\Big|\Big\}.
\end{align*}
As a consequence of Lemma \ref{a-cv} and \eqref{cv-mass}, we obtain  
\begin{equation}
\label{tem}
\mathbb P\Big(\limsup_{n\to\infty} \Big|\int_{0}^{M/2^{m}} \frac1n a_{n}(\delta_{n}t)dt -\int_{0}^{M/2^{m}} \mu(\tcT_{t})dt \Big|>2^{-m+1}\Big)\to 0, \quad \text{ as } n\to\infty,
\end{equation}
jointly with the convergences in \eqref{cv-R}. 
On the other hand, Lemma \ref{mart} and a change of variable yield 
\[
\mathbb E\Big[\Big(\frac{X_{k, 1}(T_{n})}{n\delta_{n}}-\int_{0}^{\infty} \frac{1}{n}a_{n}(\delta_{n}t)dt \Big)^{2}\Big]=\frac{1}{n\delta_{n}}\mathbb E\Big[\int_{0}^{\infty}a_{n}(t)dt\Big]\to 0, 
\]
as $n\to\infty$. Combining this with \eqref{tem}, \eqref{T} and then \eqref{record}, we obtain 
\begin{equation}
\label{temp}
\frac{1}{n\delta_{n}}X_{k}(T_{n}) \xrightarrow{n\to\infty} X_{k}(\cT) \quad \text{ in probability}, 
\end{equation}
jointly with the convergences in \eqref{cv-R}. Combined with \eqref{cv-tree}, this shows the convergence of both marginals in \eqref{tt}. To get to the joint convergence, it suffices to note that the law of $(\cT, X_{k}(\cT))$ is the unique limit point of those on the left-hand side, which follows from the joint convergence in \eqref{temp} and the fact that the family $(\cR_{p})_{p\ge 1}$ uniquely determines the law of $(\cT, d, \mu)$. 
\end{proof}

\begin{proof}[{\bf Proof 2 of Theorem \ref{thm1}}]
This follows by comparing the convergence in Proposition \ref{prop'} with \eqref{cv: k-cut}. 
\end{proof}

{\small
\setlength{\bibsep}{.2em}

}

\end{document}